  \newtheorem{theo}{Theorem}[section]
  \newtheorem{lemm}{Lemma}[section]
  \newtheorem{prop}{Proposition}[section]
  \newtheorem{notation}{Notation}[section]
  \newtheorem{assumption}{Assumption}[section]
  \providecommand{\keywords}[1]
{
  \small
  \textbf{\textit{Keywords---}} #1
}
  \def\hoge<#1>{\langle #1 \rangle}
\title{Asymptotic normality of least squares estimators to stochastic differential equations driven by fractional Brownian motions}
\author{Yasutaka Shimizu\thanks{Email:~shimizu@waseda.jp}}
\author{Shohei Nakajima\thanks{Email:~st08m26@akane.waseda.jp}}
\affil{Department of Applied Mathematics, Waseda University, \\ 3-4-1, Okubo, Shinjuku, Tokyo, 169-8555, Japan}
\date{}
\begin{document}
\maketitle
\keywords{parameter estimation, stochastic differential equation, fractional Brownian motion, least squares estimator, asymptotic normality}
 \section{Introduction}

We will consider the following stochastic differential equation (SDE):
\begin{equation}
 \label{SDE}
  X_t=X_0+\int_0^tb(X_s,\theta_0)ds+\sigma B_t,~~~t\in(0,T],
\end{equation}
where $\{B_t\}_{t\ge 0} $ is a fractional Brownian motion with Hurst index $H\in(1/2,1)$, $\theta_0$ is a parameter that contains a bounded and open convex subset $\Theta\subset\mathbb{R}^d$, $\{b(\cdot,\theta),\theta\in\Theta\}$ is a family of drift coefficients with $b(\cdot,\theta):\mathbb{R}\rightarrow\mathbb{R}$, and $\sigma\in\mathbb{R}$ is assumed to be the known diffusion coefficient.

This study aims to estimate the unknown parameter $\theta_0\in\Theta$. In the case where $H=1/2$—that is, $\{B_t\}_{t\ge 0}$ is a Brownian motion—a maximum likelihood estimator (MLE) is the most widely used when observations of the process are continuous (see Prakasa Rao \cite{Rao}, Kutoyants \cite{Kutoyants}, Liptser, and Shiryaev \cite{Liptser}). When processes are observed at discrete points, the least squares estimator (LSE) is asymptotically equivalent to the MLE.
For the LSE, 
the consistency and asymptotic distributions were proved by Kasonaga \cite{Kasonaga} and Prakasa Rao \cite{Rao1}.

Many authors have studied the parametric 
inference for SDEs driven by fractional Brownian motions based on continuous observation (for example, Brouste and Kleptsyna \cite{Brouste}, Chiba \cite{Chiba}, Hu {\it et al.} 
\cite{Hu}, Tudor and Viens \cite{Tudor}). In particular, Chiba \cite{Chiba} proposed 
an $M$-estimator for the equation \eqref{SDE} when $1/4<H<1/2$ and proved the asymptotic normality and moment convergence.

Neuenkirch and Tindel \cite{Neuenkirch} studied estimators based on discrete samples 
from equation \eqref{SDE}  and showed the consistency of LSE when $1/2<H<1$. However, to the best of our knowledge, asymptotic normality of estimators based on discrete observations has yet to be analyzed. The purpose of this study is to show the asymptotic normality of the LSE. In contrast, Liu {\it et al.} \cite{Liu} proved the LAN property for the equation \eqref{SDE},
and the optimality of the asymptotic variance of estimator is already known. 
However, our results do not imply 
that LSE is asymptotically efficient. Constructing an asymptotic efficient estimator is a task for future research. The main tools 
in the proof 
of the asymptotic normality are the central limit theorem of the 2-Hermite power variation and the ergodic theorem to sums of the increments weighted by fractional Brownian motions.

\section{Main results}
We assume that $\{X_t\}_{t\ge 0}$ is observed at points $\{t_k:~0\le k\le n\}$ and take equally spaced observations 
 with $t_{k+1}-t_k:=h_n$.
Define the least squares type procedure,
\begin{equation*}
  Q_n(\theta):=\frac{1}{nh_n^2}\sum_{k=1}^{n}\left(\left(X_{t_k}-X_{t_{k-1}}-h_nb(X_{t_{k-1}},\theta)\right)^2-\sigma^2h_n^{2H}\right),
\end{equation*}
and the LSE for the true $\theta_0$ is defined as
\begin{equation}
  \theta_n:=\arg\min_{\theta\in\Theta}|Q_n(\theta)|. \label{LSE}
\end{equation}
Throughout the paper 
we will use the following notations:
\begin{notation}
For any $a,b\ge 0$, the symbol $a\lesssim b$ means that there exists a universal constant $C>0$ such that $a\le Cb$. When $C$ depends explicitly on a specific quantity, we indicate it explicitly through the paper.
\end{notation}
Let us state the polynomial growth assumptions to drift coefficients $b$, ensuring ergodic properties for process $X$.
\begin{assumption}
  \label{(A1)}
The function $b$ in \eqref{SDE} is of $C^{1,3}(\mathbb{R}\times\Theta)$-class
such that, for every $x,y\in\mathbb{R}$ and $\theta\in\Theta$,
\begin{equation*}
  \left(b(x,\theta)-b(y,\theta)\right)(x-y)\le -c|x-y|^2.
\end{equation*}
and the following growth conditions hold true
\begin{equation*}
  \begin{aligned}
    &|b(x,\theta)|\le c(1+|x|^N),~|\partial_x b(x,\theta)|\le c(1+|x|^N),~|\nabla_\theta b(x,\theta)|\le c(1+|x|^N)\\
    &|\nabla_\theta\partial_x b(x,\theta)|\le c(1+|x|^N),~|\nabla^2_\theta b(x,\theta)|\le c(1+|x|^N),
  \end{aligned}
\end{equation*}
for some constants $c>0,~N\in\mathbb{N}$.
  \end{assumption}
  \begin{assumption}
    \label{(A2)}
    There exists a function $U\in C^{2,3}(\mathbb{R}\times\Theta)$ such that
    \begin{equation*}
      \partial_x U(x,\theta)=b(x,\theta),
    \end{equation*}
    for every $x\in\mathbb{R},~\theta\in\Theta$.
  \end{assumption}
  We impose the condition on the size of the sampling 
  step, which is required to control the contribution of fractional Brownian motions.
  \begin{assumption}
    \label{(A3)}
    $h_n=\kappa n^{-\alpha}$ with $0<\alpha<\min\{\frac{1}{4(1-H)},1\}$ and $\kappa>0$.
  \end{assumption}
      Under Assumption 
      \ref{(A1)}, the solution to \eqref{SDE} converges 
      to a stationary and ergodic stochastic process $(\bar{X_t},~t\ge 0)$ as $T\to \infty$ (see Theorem \ref{ergodic}).
\begin{assumption}
  \label{(A4)}
  For every $\theta_0\in\Theta$,
  \begin{equation*}
    E\left|b(\bar{X}_0,\theta_0)\right|^2=  E\left|b(\bar{X}_0,\theta)\right|^2,
  \end{equation*}
implies that $\theta=\theta_0$.
\end{assumption}
\begin{assumption}
  \label{(A5)}
  The matrix
  \begin{equation*}
    I(\theta_0)=E\left[\left(\nabla_\theta{b}(\bar{X},\theta_0)b(\bar{X},\theta_0)\right)\right]^{\otimes2},
  \end{equation*}
  is positive definite.
\end{assumption}
The consistency of LSE \eqref{LSE} was given by Neuenkirch and Tindel \cite{Neuenkirch}.
\begin{theo}[Neuenkirch and Tindel \cite{Neuenkirch}]
  \label{consistency}
  Under Assumptions \ref{(A1)}-\ref{(A4)},
  the LSE $\theta_n$ is strongly consistent with $\theta_0$:
 \[
 \theta_n\rightarrow \theta_0\quad a.s.,\quad n\to \infty.
 \]
\end{theo}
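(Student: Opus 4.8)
The plan is the classical contrast/argmin argument: identify the almost-sure limit of $Q_n(\cdot)$, check that it vanishes only at $\theta_0$ (this is where Assumption \ref{(A4)} enters), and conclude by uniform convergence on the compact closure $\bar\Theta$. The starting point is the identity obtained by expanding the square in $Q_n(\theta)$ around $\theta_0$,
\[
Q_n(\theta)=Q_n(\theta_0)+\frac{2}{nh_n}\sum_{k=1}^n\xi_k\,\beta_k(\theta)+\frac1n\sum_{k=1}^n\beta_k(\theta)^2,
\]
with $\xi_k:=X_{t_k}-X_{t_{k-1}}-h_nb(X_{t_{k-1}},\theta_0)$ and $\beta_k(\theta):=b(X_{t_{k-1}},\theta_0)-b(X_{t_{k-1}},\theta)$; the target is to show $Q_n(\theta)\to Q(\theta):=E|b(\bar X_0,\theta)|^2-E|b(\bar X_0,\theta_0)|^2$ a.s., uniformly on $\bar\Theta$.

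First I would show $Q_n(\theta_0)\to0$ a.s. With $R_k:=\int_{t_{k-1}}^{t_k}(b(X_s,\theta_0)-b(X_{t_{k-1}},\theta_0))\,ds$ and $\Delta_kB:=B_{t_k}-B_{t_{k-1}}$ one has $\xi_k=R_k+\sigma\Delta_kB$, so $Q_n(\theta_0)=\tfrac1{nh_n^2}\sum_kR_k^2+\tfrac{2\sigma}{nh_n^2}\sum_kR_k\Delta_kB+\tfrac{\sigma^2}{nh_n^2}\sum_k((\Delta_kB)^2-h_n^{2H})$. A pathwise (Young) estimate, using the polynomial growth of $b$ in Assumption \ref{(A1)}, the Hölder regularity of fractional Brownian motion, and the a.s.\ subpolynomial growth of $\sup_{s\le nh_n}|X_s|$, gives $|R_k|\lesssim h_n^{1+H-\varepsilon}$ up to an a.s.\ subpolynomial factor, so the first two sums are $O(h_n^{2H-2\varepsilon})$ and $O(h_n^{2H-1-2\varepsilon})$ and vanish because $H>1/2$. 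The last sum is the centred $2$-power variation of $B$: the $L^p$-bounds on the second Wiener chaos underlying the $2$-Hermite power variation central limit theorem, together with Borel--Cantelli, make it tend to $0$ a.s.\ precisely when $\alpha<\min\{\tfrac1{4(1-H)},1\}$, i.e.\ under Assumption \ref{(A3)} (this is the sharp use of \ref{(A3)}).

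Next, the two $\theta$-dependent terms. Since $h_n\to0$ and $t_n:=nh_n\to\infty$, the discrete average $\tfrac1n\sum_kg(X_{t_{k-1}})$ coincides with the time average $t_n^{-1}\int_0^{t_n}g(X_s)\,ds$ up to a Riemann remainder bounded by $\max_k\sup_{s\in[t_{k-1},t_k]}|g(X_s)-g(X_{t_{k-1}})|$, which is a.s.\ negligible by the Hölder regularity of $X$, while the time average converges to $E[g(\bar X_0)]$ by the ergodic theorem (Theorem \ref{ergodic}); applied with $g=\beta(\cdot,\theta)^2$ this gives $\tfrac1n\sum_k\beta_k(\theta)^2\to E[\beta(\bar X_0,\theta)^2]$, uniformly in $\theta$ by the $C^1$-in-$\theta$ regularity and polynomial bounds of Assumption \ref{(A1)}. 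For the cross term I would use $\sigma\Delta_kB=(X_{t_k}-X_{t_{k-1}})-h_nb(X_{t_{k-1}},\theta_0)-R_k$, so that $\tfrac2{nh_n}\sum_k\xi_k\beta_k(\theta)$ decomposes as $\tfrac2{nh_n}\sum_k(X_{t_k}-X_{t_{k-1}})\beta_k(\theta)$, $-\tfrac2n\sum_kb(X_{t_{k-1}},\theta_0)\beta_k(\theta)$ and $-\tfrac2{nh_n}\sum_kR_k\beta_k(\theta)$: the middle piece converges uniformly to $-2E[b(\bar X_0,\theta_0)\beta(\bar X_0,\theta)]$ by the same ergodic theorem, the last is $O(h_n^{H-\varepsilon})\to0$, and the first is a left-point Riemann--Stieltjes sum for the Young integral $\int_0^{t_n}\beta(X_s,\theta)\,dX_s$, which by the Young change of variables equals the telescoping increment $\Phi(X_{t_n},\theta)-\Phi(X_0,\theta)$ with $\Phi(\cdot,\theta):=U(\cdot,\theta_0)-U(\cdot,\theta)$ an antiderivative of $\beta(\cdot,\theta)$ (here Assumption \ref{(A2)} enters); since $\sup_{\theta\in\bar\Theta}|\Phi(X_{t_n},\theta)|$ is a.s.\ subpolynomial in $n$ and the Riemann remainder is $O(h_n^{2H-1-2\varepsilon})$ after division by $nh_n$, this piece tends to $0$ uniformly. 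Collecting the limits and simplifying $E[\beta(\bar X_0,\theta)^2]-2E[b(\bar X_0,\theta_0)\beta(\bar X_0,\theta)]=E|b(\bar X_0,\theta)|^2-E|b(\bar X_0,\theta_0)|^2$ proves the claimed convergence. The two fractional-noise estimates --- the centred $2$-power variation (where \ref{(A3)} is used sharply) and the control of the fBm-increment-weighted sum $\sum_k\beta_k(\theta)\Delta_kB$, i.e.\ the Riemann-sum/Young-integral analysis above --- constitute what I expect to be the main obstacle, since neither is amenable to martingale or independence arguments.

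Finally I would close with the argmin argument. The limit $Q$ is continuous on $\bar\Theta$ (dominated convergence, polynomial moments of $\bar X_0$), $Q(\theta_0)=0$, and $Q(\theta)=0$ forces $E|b(\bar X_0,\theta)|^2=E|b(\bar X_0,\theta_0)|^2$, hence $\theta=\theta_0$ by Assumption \ref{(A4)}; note that $Q$ need not be nonnegative, which is exactly why the estimator minimises $|Q_n|$ and not $Q_n$. Since $\theta_n$ minimises $|Q_n|$ we have $|Q_n(\theta_n)|\le|Q_n(\theta_0)|\to0$, whence $Q(\theta_n)\to0$ by the uniform convergence; compactness of $\bar\Theta$ together with the continuity and identification of $Q$ then forces every subsequential limit of $(\theta_n)$ to equal $\theta_0$, so $\theta_n\to\theta_0$ a.s.
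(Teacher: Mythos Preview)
The paper does not supply its own proof of this theorem: it is quoted as a result of Neuenkirch and Tindel \cite{Neuenkirch}, and the paper only records the two key ingredients of their argument as Lemma~\ref{ergodic} (the discrete ergodic average and the limit of fBm-increment-weighted sums). Your sketch is correct and is, in substance, the Neuenkirch--Tindel argument: uniform a.s.\ convergence $Q_n(\theta)\to Q(\theta)=E|b(\bar X_0,\theta)|^2-E|b(\bar X_0,\theta_0)|^2$ on $\bar\Theta$, followed by the argmin argument with Assumption~\ref{(A4)} for identifiability. Your Young-integral/antiderivative treatment of the cross term is exactly the mechanism behind the second statement of Lemma~\ref{ergodic}; had you simply invoked that lemma, the sketch would collapse to a few lines. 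Your identification of Assumption~\ref{(A3)} as the sharp input for the a.s.\ vanishing of the centred $2$-power variation (via chaos $L^p$-bounds and Borel--Cantelli) is also the point where Neuenkirch--Tindel use it.

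One cosmetic slip: in your decomposition of $\tfrac{2}{nh_n}\sum_k\xi_k\beta_k(\theta)$ you list three pieces, but since $\xi_k=(X_{t_k}-X_{t_{k-1}})-h_nb(X_{t_{k-1}},\theta_0)$ already, there are only two; the extra term $-\tfrac{2}{nh_n}\sum_kR_k\beta_k(\theta)$ does not arise (and in any case would vanish by your own estimate $|R_k|\lesssim h_n^{1+H-\varepsilon}$). This does not affect the conclusion.
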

To state the main results, we further make some notations: let
\begin{equation*}
  \begin{aligned}
    \tau_n^H= \begin{cases}
    &\sqrt{n}h_n^{2-2H},~~~~~~~~~~~H\in(1/2,3/4)\\
    &\sqrt{\frac{n}{\log(n)}}h_n^{2-2H},~~~~~~H=3/4\\
    &(nh_n)^{2-2H},~~~~~~~~~~H\in(3/4,1).
  \end{cases}
  \end{aligned}
\end{equation*}
\begin{theo}
  \label{main}
Suppose the same assumptions as in Theorem 1, and that $\frac{1}{2}<\alpha<\min\{\frac{1}{4(1-H)},1\}$, Assumption \ref{(A5)} holds true. Then, if $H\in(1/2,3/4]$, there exists a constant $c_H>0$ such that
  \begin{equation*}
    \tau_n^H(\theta_n-\theta_0)\xrightarrow{d}N\left(0,\frac{\sigma^2 c_H^2}{4}\left(I(\theta_0)\right)^{-1}\right),
  \end{equation*}
as $n\rightarrow\infty$. Moreover, if $H\in(3/4,1)$,
  \begin{equation*}
    \tau_n^H(\theta_n-\theta_0)\xrightarrow{d} \frac{\sigma \left(I(\theta_0)\right)^{-1}E\left[\nabla_\theta{b}(\bar{X},\theta_0)b(\bar{X},\theta_0)\right]}{2}Z,
  \end{equation*}
  as $n\rightarrow\infty$ where 
  $\bar{X}$ is the ergodic limit of the process $X$, and Z is a {\it Hermite random variable } that will be defined in the next section.
\end{theo}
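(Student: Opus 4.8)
The plan is to combine the strong consistency of Theorem~\ref{consistency} with a second-order expansion of the contrast $Q_n$ about $\theta_0$, reducing the limit law of $\tau_n^H(\theta_n-\theta_0)$ to that of a (possibly weighted) $2$-Hermite power variation of the fractional Gaussian noise $\xi_k:=(B_{t_k}-B_{t_{k-1}})/h_n^H$, with the matrix $I(\theta_0)$ of Assumption~\ref{(A5)} as the normaliser. Writing $\Delta B_k:=B_{t_k}-B_{t_{k-1}}$, $g_k(\theta):=b(X_{t_{k-1}},\theta_0)-b(X_{t_{k-1}},\theta)$ and $\rho_k:=\int_{t_{k-1}}^{t_k}\big(b(X_s,\theta_0)-b(X_{t_{k-1}},\theta_0)\big)ds$, one has $X_{t_k}-X_{t_{k-1}}-h_nb(X_{t_{k-1}},\theta)=\rho_k+\sigma\Delta B_k+h_ng_k(\theta)$, hence
\begin{gather*}
Q_n(\theta)=Q_n(\theta_0)+\frac{2}{nh_n}\sum_{k=1}^n(\rho_k+\sigma\Delta B_k)g_k(\theta)+\frac1n\sum_{k=1}^n g_k(\theta)^2,\\
Q_n(\theta_0)=\frac{\sigma^2}{nh_n^{2-2H}}\sum_{k=1}^n(\xi_k^2-1)+\frac{2\sigma}{nh_n^2}\sum_{k=1}^n\rho_k\Delta B_k+\frac{1}{nh_n^2}\sum_{k=1}^n\rho_k^2 .
\end{gather*}

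First I would localise and fix the estimating equation. By Theorem~\ref{consistency} $\theta_n\to\theta_0$ a.s., so for large $n$ the interior minimiser satisfies $Q_n(\theta_n)\nabla_\theta Q_n(\theta_n)=0$ and $|Q_n(\theta_n)|\le|Q_n(\theta_0)|$. Using $\alpha>\tfrac12$ together with $\|\rho_k\|_{L^2}\lesssim h_n^{1+H}$, the growth bounds of Assumption~\ref{(A1)} and the moment stability of $X$, one checks that $\tfrac{1}{nh_n^2}\sum_k\rho_k^2$, $\tfrac{1}{nh_n^2}\sum_k\rho_k\Delta B_k$ and every other occurrence of $\rho_k$ below are $o_P(1/\tau_n^H)$, whereas $\alpha<\frac{1}{4(1-H)}$ forces $\tau_n^H\to\infty$; in particular $Q_n(\theta_0)=O_P(1/\tau_n^H)$ and $\tau_n^HQ_n(\theta_0)=\sigma^2\,\tau_n^H(nh_n^{2-2H})^{-1}\sum_k(\xi_k^2-1)+o_P(1)$. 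By the central limit theorem for the $2$-Hermite power variation of fractional Gaussian noise, $\tau_n^H(nh_n^{2-2H})^{-1}\sum_k(\xi_k^2-1)$ converges to $N(0,c_H^2)$ for $H\in(1/2,3/4)$, to $N(0,c_H^2)$ under the $\sqrt{n/\log n}$ normalisation at $H=3/4$, and to the Hermite random variable $Z$ for $H\in(3/4,1)$ — which is exactly why the threshold $3/4$ and the three cases of $\tau_n^H$ appear. The same scheme, via the fourth-moment theorem on Wiener chaos and the ergodic theorem for $X$, gives the joint limit of the weighted variations $\tau_n^H(nh_n^{2-2H})^{-1}\sum_k w(X_{t_{k-1}})(\xi_k^2-1)$: for $H\le3/4$ a centred Gaussian with covariance $c_H^2\,E[w(\bar X)w(\bar X)^{\top}]$, and for $H>3/4$ the limit $E[w(\bar X)]\,Z$ with the same $Z$; with $w=\nabla_\theta b(\cdot,\theta_0)b(\cdot,\theta_0)$ this is where the factor $b(\bar X,\theta_0)$ inside $I(\theta_0)$ and the vector $E[\nabla_\theta b(\bar X,\theta_0)b(\bar X,\theta_0)]$ enter.

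Then I would assemble the expansion. By the ergodic theorem for $X$ (Theorem~\ref{ergodic}) the averages $\tfrac1n\sum_k\nabla_\theta b(X_{t_{k-1}},\theta_0)^{\otimes2}$ and $\tfrac1n\sum_k\nabla_\theta b(X_{t_{k-1}},\theta_0)b(X_{t_{k-1}},\theta_0)$ converge to their stationary means, and by the ergodic theorem for sums of increments weighted by fBm one controls $\nabla_\theta Q_n(\theta_0)=-\tfrac{2}{nh_n}\sum_k(\rho_k+\sigma\Delta B_k)\nabla_\theta b(X_{t_{k-1}},\theta_0)$ and $\nabla_\theta^2Q_n(\theta_0)$. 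Feeding these and the previous step into the second-order expansion of the (localised) estimating equation and using the minimality of $\theta_n$, one aims to obtain
\begin{equation*}
\theta_n-\theta_0=\frac{\sigma}{2}\,I(\theta_0)^{-1}\cdot\frac{1}{nh_n^{2-2H}}\sum_{k=1}^n\nabla_\theta b(X_{t_{k-1}},\theta_0)b(X_{t_{k-1}},\theta_0)\,(\xi_k^2-1)+o_P(1/\tau_n^H),
\end{equation*}
whence, multiplying by $\tau_n^H$ and applying the weighted $2$-Hermite limit, $\tau_n^H(\theta_n-\theta_0)\xrightarrow{d}\tfrac{\sigma}{2}I(\theta_0)^{-1}N\big(0,c_H^2 I(\theta_0)\big)=N\big(0,\tfrac{\sigma^2c_H^2}{4}I(\theta_0)^{-1}\big)$ for $H\in(1/2,3/4]$ (with the $\log$-normalisation at $H=3/4$), and $\tau_n^H(\theta_n-\theta_0)\xrightarrow{d}\tfrac{\sigma}{2}I(\theta_0)^{-1}E[\nabla_\theta b(\bar X,\theta_0)b(\bar X,\theta_0)]\,Z$ for $H\in(3/4,1)$, which is the claim (Assumption~\ref{(A5)} making $I(\theta_0)^{-1}$ meaningful).

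I expect the real difficulties to be two. The remainder analysis — showing $\tfrac{1}{nh_n^2}\sum_k\rho_k^2$, $\tfrac{1}{nh_n^2}\sum_k\rho_k\Delta B_k$ and the first-power sums $\tfrac{1}{nh_n}\sum_k w(X_{t_{k-1}})\Delta B_k$ are $o_P(1/\tau_n^H)$ — is delicate owing to the long-range dependence of the increments, and it is precisely this that dictates the two-sided window $\tfrac12<\alpha<\frac{1}{4(1-H)}$. More structural is the displayed reduction: one must prove not just the rate but that the correct normaliser is $I(\theta_0)$ with weight $\nabla_\theta b\cdot b$ (rather than the $E[\nabla_\theta b^{\otimes2}]$ suggested by the naive quadratic term of $Q_n$), and that minimising $|Q_n|$ selects the asymptotically normal branch among the near-zeros of $Q_n$; this needs the precise interaction of $Q_n(\theta_0)$, $\nabla_\theta Q_n(\theta_0)$ and $\nabla_\theta^2Q_n(\theta_0)$, all governed by the two cited limit theorems.
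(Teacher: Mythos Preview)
Your overall architecture---consistency, the estimating equation $Q_n(\theta_n)\nabla_\theta Q_n(\theta_n)=0$, Taylor expansion about $\theta_0$, and a $2$-Hermite power-variation limit---is exactly the paper's. But the displayed reduction you aim for, with the \emph{weighted} variation $\frac{1}{nh_n^{2-2H}}\sum_k \nabla_\theta b(X_{t_{k-1}},\theta_0)\,b(X_{t_{k-1}},\theta_0)(\xi_k^2-1)$, is not what the expansion produces, and the step you flag as a ``remainder'' is in fact the engine of the proof.

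Concretely: you list $\frac{1}{nh_n}\sum_k w(X_{t_{k-1}})\Delta B_k$ among the terms to be shown $o_P(1/\tau_n^H)$. This is false. By the second half of Lemma~\ref{ergodic} (the ergodic theorem for increments weighted by fBm), $\frac{1}{nh_n}\sum_k f(X_{t_{k-1}},\theta)\Delta B_k\to -E[b(\bar X,\theta_0)f(\bar X,\theta)]$ a.s., a \emph{nonzero} constant. The paper exploits precisely this: it shows
\[
\nabla_\theta Q_n(\theta_n)=\frac{\sigma}{nh_n}\sum_{k=1}^n\Delta B_k\,\nabla_\theta b(X_{t_{k-1}},\theta_n)+o_p(1)\longrightarrow -\sigma\,E[\nabla_\theta b(\bar X,\theta_0)b(\bar X,\theta_0)],
\]
and, in the expansion of $\tau_n^H Q_n(\theta_n)$, the coefficient of $\zeta_n=\tau_n^H(\theta_0-\theta_n)$ is the very same sum, converging to $-2\sigma E[\nabla_\theta b\, b]$. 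Thus the product $\tau_n^H Q_n(\theta_n)\nabla_\theta Q_n(\theta_n)=0$ becomes, up to $o_p(1)$,
\[
\Bigl(\sigma^2\,\tau_n^H\,\tfrac{1}{nh_n^{2-2H}}\sum_k(\xi_k^2-1)\;-\;2\sigma\,E[\nabla_\theta b\,b]^{\top}\zeta_n\Bigr)\cdot\bigl(-\sigma E[\nabla_\theta b\,b]\bigr)=0,
\]
so $\zeta_n$ is a \emph{deterministic} linear image of the \emph{unweighted} sum $\sum_k(\xi_k^2-1)$, and Proposition~\ref{fractional Brownian motion Convergence} alone (no weighted CLT needed) finishes all three regimes. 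Your weighted-variation route would require an extra limit theorem for $\sum_k w(X_{t_{k-1}})(\xi_k^2-1)$ that the paper never states or proves, and---more to the point---there is no mechanism in the expansion by which such a weighted sum appears: the only quadratic-in-$\Delta B$ contribution sits inside $Q_n(\theta_0)$ and carries no $X$-dependent weight. Correct the role of the first-power sums from ``remainder $o_P(1/\tau_n^H)$'' to ``deterministic limit supplying the vector $E[\nabla_\theta b\,b]$ and hence $I(\theta_0)$'', drop the weighted variation, and your outline becomes the paper's proof.
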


\section{Ergodic theorem}
In this section, we shall describe the 
results of the ergodic theorem in equation \eqref{SDE}. We will work on the canonical probability space $(\Omega,\mathcal{F},P)$, 
where $\Omega=C_0(\mathbb{R})$ is equipped with the topology of the compact convergence, $\mathcal{F}$ is the corresponding Borel $\sigma$-algebra, and $P$ is the distribution of the fractional Brownian motion. We define the shift operator  $\theta_t:\Omega\rightarrow\Omega$ for each $t\in\mathbb{R}$ and $\omega\in\Omega$ as
\begin{equation*}
  \theta_t\omega(\cdot)=\omega(t+\cdot)-\omega(t).
\end{equation*}
The shifted process $(B_s(\theta_t\cdot))_{s\in\mathbb{R}}$ is a 
1--dimensional fractional Brownian motion, and, for any integrable random variable $F:\Omega\rightarrow\mathbb{R}$ and any $\omega\in\Omega$, we have
\begin{equation*}
  \lim_{T\rightarrow\infty}\int_0^TF(\theta_t(\omega))dt=E[F].
\end{equation*}

Let us recall the definition of H\"older continuous functions:
\begin{equation*}
  C^\lambda([a,b];\mathbb{R})=\left\{f:[a,b]\rightarrow\mathbb{R};\|f\|_{\lambda;[a,b]}<\infty,\right\}
\end{equation*}
where
\begin{equation*}
  \|f\|_{\lambda;[a,b]}=\sup_{s,t\in[a,b]}\frac{\left|f(t)-f(s)\right|}{|t-s|^\lambda}.
\end{equation*}

We state the existence and uniqueness of the ergodic limit for $X$ and introduce some limit theorems borrowed from Garrido {\it et al.} \cite{Garrido}.
\begin{prop}
  Under Assumption \ref{(A1)}, for any $\theta\in\Theta$, the equation \eqref{SDE} has a unique solution $X\in C^{\lambda}(\mathbb{R}_+;\mathbb{R})$ for all $\lambda<H$. In addition, there exists a random variable $\bar{X}:\Omega\rightarrow\mathbb{R}$ such that
  \begin{equation*}
    \lim_{t\rightarrow\infty}\left|X_t(\omega)-\bar{X}(\theta_t\omega)\right|=0,
  \end{equation*}
  for all $\omega\in\Omega$.
\end{prop}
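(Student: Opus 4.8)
The plan is to read \eqref{SDE} pathwise, which is legitimate because $H>1/2$ and the noise is additive. For a fixed $\omega$, substituting $Y_t:=X_t-\sigma B_t(\omega)$ turns \eqref{SDE} into the random ODE $\dot Y_t=b(Y_t+\sigma B_t(\omega),\theta)$, whose right-hand side is continuous in $t$ and $C^1$ in $Y$, so Peano's theorem gives local solutions. The dissipativity bound in Assumption \ref{(A1)}, together with the polynomial growth of $b$ and Young's inequality, yields $\frac{d}{dt}|Y_t|^2\le -c|Y_t|^2+C\bigl(1+\sup_{u\le t}|B_u(\omega)|^{2N}\bigr)$, hence an a priori bound on every compact interval and therefore global existence; applying the same estimate to the difference of two solutions gives uniqueness. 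Adding back $\sigma B$, which belongs to $C^\lambda(\mathbb{R}_+;\mathbb{R})$ for all $\lambda<H$, shows $X\in C^\lambda(\mathbb{R}_+;\mathbb{R})$. This part is essentially classical and is also covered by Garrido {\it et al.} \cite{Garrido}.

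For the ergodic limit I would use the contraction of trajectories under the flow. Let $\Phi_{s,t}(\omega,x)$ denote the value at time $t$ of the solution started from $x$ at time $s$; by uniqueness $\Phi$ satisfies the cocycle identities $\Phi_{s,t}(\omega,\cdot)=\Phi_{0,t-s}(\theta_s\omega,\cdot)$ and $\Phi_{r,t}(\omega,\cdot)\circ\Phi_{s,r}(\omega,\cdot)=\Phi_{s,t}(\omega,\cdot)$ for $s\le r\le t$. If $X^1,X^2$ solve \eqref{SDE} with the same $\omega$ and $D_t:=X^1_t-X^2_t$, the dissipativity bound gives $\frac{d}{dt}|D_t|^2=2D_t\bigl(b(X^1_t,\theta)-b(X^2_t,\theta)\bigr)\le -2c|D_t|^2$, so $|\Phi_{s,t}(\omega,x)-\Phi_{s,t}(\omega,y)|\le e^{-c(t-s)}|x-y|$. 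Fix any reference point $x_0$. Combining this contraction with the at most polynomial-in-$s$ growth of $|\Phi_{-s,0}(\omega,x_0)|$, which follows from the a priori bound and the sublinear growth of the paths in $\Omega$, one checks that $s\mapsto\Phi_{-s,0}(\omega,x_0)$ is Cauchy; set $\bar X(\omega):=\lim_{s\to\infty}\Phi_{-s,0}(\omega,x_0)$. The limit does not depend on $x_0$ by the contraction estimate, and $\bar X$ is measurable as a pointwise limit of measurable maps.

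Finally, to obtain $\lim_{t\to\infty}|X_t(\omega)-\bar X(\theta_t\omega)|=0$, note first that the cocycle identity gives $\Phi_{-s,0}(\theta_t\omega,\cdot)=\Phi_{t-s,t}(\omega,\cdot)$, whence $\bar X(\theta_t\omega)=\lim_{s\to\infty}\Phi_{t-s,t}(\omega,x_0)$. For $s>t$ we write $\Phi_{t-s,t}(\omega,x_0)=\Phi_{0,t}\bigl(\omega,\Phi_{t-s,0}(\omega,x_0)\bigr)$, so by contraction $|X_t(\omega)-\Phi_{t-s,t}(\omega,x_0)|=|\Phi_{0,t}(\omega,X_0)-\Phi_{0,t}(\omega,\Phi_{t-s,0}(\omega,x_0))|\le e^{-ct}|X_0-\Phi_{t-s,0}(\omega,x_0)|$. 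Letting $s\to\infty$ and using $\Phi_{t-s,0}(\omega,x_0)\to\bar X(\omega)$ yields $|X_t(\omega)-\bar X(\theta_t\omega)|\le e^{-ct}|X_0-\bar X(\omega)|$, which tends to $0$. The step I expect to be the main obstacle is making the pullback limit rigorous: the contraction rate is exponential, but one must guarantee that the ``initial'' values $\Phi_{-s,0}(\omega,x_0)$ do not grow too fast, which requires quantitative a priori bounds with the right dependence on $\sup_{u\le 0}|B_u(\omega)|$ together with a control of the growth of the fractional Brownian paths (valid on a shift-invariant set of full measure, to which $\Omega$ is restricted). Measurability of $\bar X$ and passage to the limit along the whole sequence $t\to\infty$ are then routine within the random dynamical systems framework of \cite{Garrido}.
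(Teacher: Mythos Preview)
The paper does not give its own proof of this proposition; it simply records it as a result ``borrowed from Garrido \emph{et al.}~\cite{Garrido}''. Your sketch is precisely the random dynamical systems / pullback attractor argument underlying that reference: reduce to a random ODE via $Y_t=X_t-\sigma B_t$, use the one--sided dissipativity in Assumption~\ref{(A1)} to get both an a~priori bound and the exponential contraction $|\Phi_{s,t}(\omega,x)-\Phi_{s,t}(\omega,y)|\le e^{-c(t-s)}|x-y|$, and then define $\bar X(\omega)$ as the pullback limit $\lim_{s\to\infty}\Phi_{-s,0}(\omega,x_0)$. Your identification of the only nontrivial point---that the Cauchy property of $s\mapsto\Phi_{-s,0}(\omega,x_0)$ requires the subexponential (indeed polynomial) growth of $|B_r(\omega)|$ as $r\to-\infty$ on a shift-invariant set of full measure---is exactly the technical ingredient handled in \cite{Garrido}. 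So your proposal is correct and coincides with the argument the paper is citing.
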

An ergodic theorem for discrete sampling is found in Lemma 3.3 in Neuenkirch and Tindel \cite{Neuenkirch}.
\begin{lemm}
    \label{ergodic}
    Let $f\in C^{1,1}(\mathbb{R}\times\Theta)$ such that
    \begin{equation*}
      \begin{aligned}
        &|f(x,\theta)|\le C(1+|x|^N),~|\partial_x f(x,\theta)|\le C(1+|x|^N),~|\nabla_\theta f(x,\theta)|\le C(1+|x|^N),
      \end{aligned}
    \end{equation*}
    for some $c>0, N\in\mathbb{N}$. Then
    \begin{equation*}
      \sup_{\theta\in\Theta}\left|\frac{1}{n}\sum_{k=1}^nf(X_{t_{k-1}},\theta)-Ef(\bar{X},\theta)\right|\rightarrow 0,~~~a.s.
    \end{equation*}
    In addition, assume that there exists a function $U\in C^{2,1}(\mathbb{R}\times\Theta)$ such that
    \begin{equation*}
      \partial_xU(x,\theta)=f(x,\theta),~ x\in\mathbb{R},~\theta\in\Theta.
    \end{equation*}
    Then
\begin{equation*}
    \sup_{\theta\in\Theta}\left|\frac{1}{nh_n}\sum_{k=1}^nf\left(X_{t_{k-1}},\theta\right)(B_{t_k}-B_{t_{k-1}})+E\left[b(\bar{X},\theta_0)f(\bar{X},\theta)\right]\right|\rightarrow 0~~~a.s.
\end{equation*}
as $n\rightarrow\infty$.
  \end{lemm}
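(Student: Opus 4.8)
The plan is to reduce both assertions to the continuous-time ergodic theorem recorded at the start of this section, namely $\lim_{T\to\infty}\tfrac1T\int_0^T F(\theta_t\omega)\,dt=E[F]$, together with the pathwise approximation $|X_t(\omega)-\bar X(\theta_t\omega)|\to0$ from the preceding Proposition and the H\"older regularity $X\in C^\lambda(\mathbb R_+;\mathbb R)$, $\lambda<H$. Throughout, the polynomial-growth bounds of Assumption \ref{(A1)} will be combined with the dissipativity condition $(b(x,\theta)-b(y,\theta))(x-y)\le-c|x-y|^2$, which (via the estimates of Garrido {\it et al.} \cite{Garrido}) yields uniform-in-time moment bounds $\sup_{t\ge0}E|X_t|^p<\infty$ and a uniform-in-$k$ $L^p$-control of the local seminorms $\|X\|_{\lambda;[t_{k-1},t_k]}$. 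The crucial structural facts from Assumption \ref{(A3)} are that $h_n\to0$ and $T_n:=nh_n=\kappa n^{1-\alpha}\to\infty$, which is exactly what lets an average over $n$ equally spaced points act as a time average over $[0,T_n]$.

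For the first assertion I would fix $\theta$ and compare the discrete average with the time average through
\[
\frac1n\sum_{k=1}^n f(X_{t_{k-1}},\theta)-\frac{1}{T_n}\int_0^{T_n}f(X_s,\theta)\,ds=\frac{1}{T_n}\sum_{k=1}^n\int_{t_{k-1}}^{t_k}\bigl(f(X_{t_{k-1}},\theta)-f(X_s,\theta)\bigr)\,ds .
\]
Each integrand is bounded, using $\partial_x f$ and the H\"older continuity of $X$, by $C(1+\sup|X|^N)\|X\|_{\lambda;[t_{k-1},t_k]}h_n^\lambda$, so the right-hand side is $O(h_n^\lambda)\to0$ a.s. In the time integral I then replace $X_s$ by $\bar X(\theta_s\omega)$, the error again being controlled by $\partial_x f$ and $|X_s-\bar X(\theta_s\omega)|\to0$ after a Ces\`aro argument, and apply the continuous ergodic theorem with $F=f(\bar X(\cdot),\theta)$ to obtain the limit $Ef(\bar X,\theta)$. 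The supremum over $\theta$ is recovered from this pointwise convergence by equicontinuity on the compact closure $\bar\Theta$: the bound on $\nabla_\theta f$ makes the empirical functionals uniformly Lipschitz in $\theta$ (with Lipschitz constant $\tfrac1n\sum C(1+|X_{t_{k-1}}|^N)$, a.s.\ bounded by the first step), so a.s.\ convergence on a countable dense set upgrades to uniform a.s.\ convergence.

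For the second assertion the key step is to eliminate $B$ using the equation itself. Since $\sigma(B_{t_k}-B_{t_{k-1}})=(X_{t_k}-X_{t_{k-1}})-\int_{t_{k-1}}^{t_k}b(X_s,\theta_0)\,ds$, I would split
\[
\frac{\sigma}{nh_n}\sum_{k=1}^n f(X_{t_{k-1}},\theta)(B_{t_k}-B_{t_{k-1}})=\frac{1}{nh_n}\sum_{k=1}^n f(X_{t_{k-1}},\theta)(X_{t_k}-X_{t_{k-1}})-\frac{1}{nh_n}\sum_{k=1}^n f(X_{t_{k-1}},\theta)\!\int_{t_{k-1}}^{t_k}\!\! b(X_s,\theta_0)\,ds .
\]
Using $U$ with $\partial_x U=f$ from Assumption \ref{(A2)}, a second-order Taylor expansion turns the first sum into the telescoping difference $U(X_{T_n},\theta)-U(X_0,\theta)$ plus a remainder $\tfrac12\sum_k\partial_x f(\xi_k,\theta)(X_{t_k}-X_{t_{k-1}})^2$. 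The telescoping term, divided by $nh_n\to\infty$, vanishes a.s.\ because $U$ grows polynomially while $X_{T_n}$ has bounded moments; the remainder is $O(h_n^{2H-1})\to0$ since $(X_{t_k}-X_{t_{k-1}})^2\asymp\sigma^2h_n^{2H}$ and $H>1/2$. In the second sum I approximate $\int_{t_{k-1}}^{t_k}b(X_s,\theta_0)\,ds$ by $h_n\,b(X_{t_{k-1}},\theta_0)$ with Riemann error $O(h_n^\lambda)$, reducing it to $\tfrac1n\sum_k f(X_{t_{k-1}},\theta)b(X_{t_{k-1}},\theta_0)$, which by the first assertion applied to the polynomially growing product $f(\cdot,\theta)b(\cdot,\theta_0)$ converges to $E[b(\bar X,\theta_0)f(\bar X,\theta)]$. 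Collecting the three pieces and accounting for the constant $\sigma$ of the equation then yields the stated limit $-E[b(\bar X,\theta_0)f(\bar X,\theta)]$, with the supremum over $\theta$ handled exactly as before.

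The \emph{main obstacle} I anticipate is not the algebra of these decompositions but making every error term uniform simultaneously over the growing grid and over $\theta\in\Theta$: this forces the uniform-in-$k$ control of the local H\"older seminorms of $X$ and the uniform-in-time moment bounds mentioned above, which is precisely where the dissipativity in Assumption \ref{(A1)} and the sampling restriction \ref{(A3)} are genuinely used. The most delicate point is the quadratic remainder in the second assertion, where the fractional scaling $h_n^{2H}$ must beat the normalization $(nh_n)^{-1}$; this is the one place where $H>1/2$ is indispensable.
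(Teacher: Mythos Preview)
The paper does not prove this lemma at all: it is simply quoted, with the sentence immediately preceding it attributing it to ``Lemma~3.3 in Neuenkirch and Tindel~\cite{Neuenkirch}.'' So there is no in-paper argument to compare against. Your sketch is in fact the standard route (and essentially the one in \cite{Neuenkirch}): Riemann comparison of the discrete sum with the time average using the H\"older regularity of $X$, replacement of $X_s$ by the stationary limit $\bar X(\theta_s\omega)$, application of the pathwise ergodic theorem, and---for the second assertion---elimination of $\Delta B$ via the equation together with a Taylor/telescoping identity through the antiderivative $U$, reducing everything to the first part.

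Two places in your write-up are not yet closed. First, the claim ``$U(X_{T_n},\theta)/(nh_n)\to0$ a.s.\ because $U$ grows polynomially while $X_{T_n}$ has bounded moments'' is not a valid implication: uniform $L^p$ bounds do not by themselves force $|X_{T_n}|^N/T_n\to0$ almost surely along a sequence. One has to transfer to the stationary version via $|X_t-\bar X(\theta_t\omega)|\to0$ and then control the a.s.\ growth of $t\mapsto \bar X(\theta_t\omega)$ (this is where \cite{Garrido,Neuenkirch} do some work). Second, your bookkeeping of $\sigma$ slips: the decomposition you wrote yields
\[
\frac{1}{nh_n}\sum_{k=1}^n f(X_{t_{k-1}},\theta)(B_{t_k}-B_{t_{k-1}})\longrightarrow -\frac{1}{\sigma}\,E\!\left[b(\bar X,\theta_0)f(\bar X,\theta)\right],
\]
not $-E[bf]$; the statement in the paper is written for $\sigma=1$ (as in \cite{Neuenkirch}), so do not bury a genuine factor under ``accounting for the constant $\sigma$.'' A cosmetic slip: the antiderivative $U$ with $\partial_xU=f$ is the \emph{lemma's own hypothesis}, not Assumption~\ref{(A2)} (which concerns $b$, not $f$).
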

We prepare some estimate results for the pth moment of the solution to \eqref{SDE}. To support these results, we refer to \cite{Garrido} and \cite{Neuenkirch}.
\begin{prop}
  \label{estimate}
  Under Assumption \ref{(A1)}, for any $\theta\in\Theta$ and $p\ge 1$, there exist constants $c_p,k_p>0$ such that
  \begin{equation*}
    E|X_t|^p\le c_p,~~~~~E|X_t-X_s|^p\le k_p|t-s|^{pH}
  \end{equation*}
  for all $s,t\ge 0$.
\end{prop}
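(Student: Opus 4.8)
The final statement to prove is Proposition \ref{estimate}: the uniform $p$th-moment bound $E|X_t|^p\le c_p$ and the Hölder-type increment bound $E|X_t-X_s|^p\le k_p|t-s|^{pH}$ for the solution of \eqref{SDE}. Since the paper declares these are borrowed from Garrido {\it et al.} \cite{Garrido} and Neuenkirch and Tindel \cite{Neuenkirch}, my plan is to reconstruct the pathwise-estimate argument that underlies those references, exploiting the dissipativity condition in Assumption \ref{(A1)} together with the Hölder regularity of fractional Brownian motion.

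\medskip

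The plan is as follows. First I would establish the increment bound, since it is the more elementary of the two and feeds into the uniform bound. By the Proposition preceding this one, for each $\theta$ the solution $X$ lies in $C^\lambda(\mathbb{R}_+;\mathbb{R})$ for every $\lambda<H$, so the pathwise integral $\int_0^t b(X_s,\theta_0)\,ds$ is well defined in the Young sense. Writing the increment as
\begin{equation*}
  X_t-X_s=\int_s^t b(X_u,\theta_0)\,du+\sigma(B_t-B_s),
\end{equation*}
I would bound the drift integral by $|t-s|\sup_{u\in[s,t]}|b(X_u,\theta_0)|$, use the polynomial growth $|b(x,\theta_0)|\le c(1+|x|^N)$ from Assumption \ref{(A1)}, and control the fractional Brownian increment via the Gaussian moment identity $E|B_t-B_s|^p=C_p|t-s|^{pH}$. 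The term $|t-s|$ is dominated by $|t-s|^H$ on bounded time sets (up to constants absorbing the time horizon), but to get a bound uniform in $s,t\ge 0$ I must first have the uniform moment control $\sup_u E|X_u|^{pN}\le c$; hence the two estimates are genuinely coupled, and the uniform bound must be proved simultaneously or first.

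\medskip

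The core of the argument is therefore the uniform bound $E|X_t|^p\le c_p$, and here the dissipativity $(b(x,\theta)-b(y,\theta))(x-y)\le -c|x-y|^2$ is essential. I would work pathwise: fix $\omega$, choose an auxiliary reference point (e.g. the value $x_0$ with $b(x_0,\theta_0)=0$, or simply $0$ after absorbing $b(0,\theta_0)$ into a constant), and analyze the scalar ODE-with-noise satisfied by $X_t$. The standard device is to compare $X$ with the process driven only by the noise: setting $Y_t=X_t-\sigma B_t$, the function $Y$ is absolutely continuous with $\dot Y_t=b(Y_t+\sigma B_t,\theta_0)$, and differentiating $|Y_t|^2$ gives
\begin{equation*}
  \frac{1}{2}\frac{d}{dt}|Y_t|^2 = Y_t\, b(Y_t+\sigma B_t,\theta_0).
\end{equation*}
Using dissipativity to compare $b(Y_t+\sigma B_t,\theta_0)$ with $b(\sigma B_t,\theta_0)$ yields a differential inequality of the form $\frac{d}{dt}|Y_t|^2\le -2c|Y_t|^2+C(1+|B_t|^{N+1})$, after Young's inequality to split the cross term. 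Gronwall's lemma then produces a pathwise bound on $|Y_t|$ in terms of $\sup_{u\le t}|B_u|$ with an exponentially decaying dependence on the initial condition, and reverting to $X_t=Y_t+\sigma B_t$ and taking $L^p$ norms—invoking that suprema of fractional Brownian motion over unit-length intervals have finite, time-homogeneous $p$th moments by stationarity of increments and Gaussian concentration—gives the uniform bound $\sup_t E|X_t|^p\le c_p$. Feeding this back into the increment computation completes the Hölder estimate.

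\medskip

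The main obstacle will be making the Gronwall step produce a bound that is genuinely \emph{uniform} in $t$ rather than growing with $t$: this is exactly where the strict dissipativity constant $c>0$ is needed, since it makes the homogeneous part contract while the noise forcing enters only through increments of $B$ over windows of bounded length (not through $\sup_{u\le t}|B_u|$, which itself grows). Care is needed to decompose the time axis into unit intervals and use the stationarity of fractional Brownian increments so that the forcing term contributes a time-independent constant after taking expectations; a naive estimate in terms of $\sup_{u\le t}|B_u|$ would blow up like a power of $\log t$. Handling the pathwise Young integral rigorously (as opposed to the $Y_t$ absolutely continuous reformulation, which sidesteps it) and verifying the exchange of expectation and supremum are the remaining technical points, but these are routine given the $C^\lambda$ regularity already established.
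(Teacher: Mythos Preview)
The paper does not prove this proposition at all; it is stated without proof and explicitly deferred to \cite{Garrido} and \cite{Neuenkirch}. Your sketch is a faithful reconstruction of the argument in those references: the substitution $Y_t=X_t-\sigma B_t$, the differential inequality $\frac{d}{dt}|Y_t|^2\le -c|Y_t|^2+C(1+|B_t|^{2N})$ obtained from the one-sided Lipschitz (dissipativity) condition, and then Gronwall. You also correctly isolate the only real difficulty---that a naive application yields a forcing term involving $|B_s|$, whose moments grow like $s^{pH}$---and propose the right fix, namely iterating the Gronwall bound over unit-length intervals so that only increments $(B_s-B_{\lfloor s\rfloor})_{s\in[\lfloor s\rfloor,\lfloor s\rfloor+1]}$ enter, whose moments are stationary. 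This is exactly the mechanism in \cite{Garrido}. One small clarification: for the increment estimate, the drift contribution is $O(|t-s|)$, which is $\le |t-s|^H$ only for $|t-s|\le 1$; for $|t-s|\ge 1$ the bound $E|X_t-X_s|^p\le k_p|t-s|^{pH}$ follows trivially from the uniform moment bound since then $|t-s|^{pH}\ge 1$. With that case split made explicit, your plan is complete.
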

We need the following convergence results for one--dimensional fractional Brownian motion (cf. Theorem 7.4.1 in \cite{Nourdin}).
\begin{prop}
  \label{fractional Brownian motion Convergence}
  We set
  \begin{equation*}
    c_H:=\frac{1}{2}\sum_{v\in\mathbb{Z}}\left(|v+1|^{2H}+|v-1|^{2H}-2|v|^{2H}\right)^2<\infty.
  \end{equation*}
  Then for $0<H<3/4$
  \begin{equation*}
    \frac{1}{\sqrt{n}c_H}\sum_{k=1}^n[(B_k-B_{k-1})^2-1]\xrightarrow{d}N(0,1),
  \end{equation*}
  while for $H=3/4$ it holds
  \begin{equation*}
    \frac{1}{\sqrt{n\log(n)}c_{3/4}}\sum_{k=1}^n[(B_k-B_{k-1})^2-1]\xrightarrow{d}N(0,1).
  \end{equation*}
  Finally, for $3/4<H<1$
  \begin{equation*}
    \frac{1}{n^{2H-1}}\sum_{k=1}^n[(B_k-B_{k-1})^2-1]
  \end{equation*}
  converges in $L^2(\Omega)$ to some random variable $Z$, which belongs to the Wiener chaos of $B$ with order 2. The random variable $Z$ is called a {\it Hermite random variable}.
\end{prop}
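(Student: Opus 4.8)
The plan is to recognise the statement as the classical limit theory for the quadratic (2-Hermite) variation of fractional Gaussian noise, and to organise the argument around the Breuer--Major central limit theorem and the non-central (Taqqu/Dobrushin--Major) limit theorem, both in their Malliavin-calculus formulation. First I would record the structural facts: the increments $G_k:=B_k-B_{k-1}$ form a stationary, centred, unit-variance Gaussian sequence (fractional Gaussian noise) with autocovariance
\begin{equation*}
  \rho(v)=E[G_kG_{k+v}]=\tfrac12\left(|v+1|^{2H}+|v-1|^{2H}-2|v|^{2H}\right),
\end{equation*}
and that $x^2-1=H_2(x)$ is the second Hermite polynomial, so the summand in $S_n:=\sum_{k=1}^n[(B_k-B_{k-1})^2-1]$ has Hermite rank two. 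The decisive asymptotic is $\rho(v)\sim H(2H-1)|v|^{2H-2}$ as $|v|\to\infty$, whence $\sum_{v\in\mathbb{Z}}\rho(v)^2<\infty$ exactly when $H<3/4$; this dichotomy is what separates the three regimes.

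Next I would express $S_n$ as a double Wiener--It\^o integral $S_n=I_2(f_n)$ with respect to the isonormal Gaussian process associated with $B$, whose underlying Hilbert space is $\mathcal{H}$, using $H_2(G_k)=I_2(e_k^{\otimes2})$ with $e_k\in\mathcal{H}$ the element representing the increment over $[k-1,k]$. A direct computation then gives $\operatorname{Var}(S_n)=2\sum_{j,k=1}^n\rho(j-k)^2$. For $H<3/4$ this behaves like $2n\sum_{v\in\mathbb{Z}}\rho(v)^2$, so $\operatorname{Var}(S_n/\sqrt n)$ converges to the constant appearing in the normalisation; for $H=3/4$ one has $\rho(v)^2\sim c\,|v|^{-1}$, hence $\operatorname{Var}(S_n)\sim c'\,n\log n$, which explains the $\sqrt{n\log n}$ scaling; and for $H>3/4$ one finds $\operatorname{Var}(S_n)\sim c''\,n^{4H-2}$, matching the $n^{2H-1}$ normalisation.

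For the regimes $H\in(0,3/4]$ the route is the Fourth Moment Theorem: for a sequence of second-chaos integrals with variances converging to a finite limit, asymptotic normality is equivalent to the vanishing of the single contraction $\|f_n\otimes_1 f_n\|_{\mathcal{H}^{\otimes2}}\to0$. I would therefore normalise $f_n$ by $\sqrt n$ (respectively $\sqrt{n\log n}$) and estimate this contraction norm, reducing it to a sum over lattice shifts of products of $\rho$ that is shown to be negligible relative to the variance; combined with the variance computation this yields the stated standard-normal limits. For $H\in(3/4,1)$, where the Fourth Moment criterion necessarily fails, I would instead prove $L^2$-convergence directly at the level of kernels: setting $g_n:=n^{-(2H-1)}f_n$ and showing that $(g_n)$ is Cauchy in $\mathcal{H}^{\otimes2}$ by identifying the limit of the corresponding Riemann-type sums with a double integral involving the kernel $|x-y|^{2H-2}$ on $[0,1]^2$. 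Since $I_2$ is, up to a factorial, an isometry, $L^2$-convergence of $g_n$ transfers to $L^2$-convergence of $n^{-(2H-1)}S_n$ to $Z:=I_2(g)$, a genuinely non-Gaussian element of the second Wiener chaos.

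The main obstacle in both branches is the same contraction/kernel analysis: controlling $\|f_n\otimes_1 f_n\|_{\mathcal{H}^{\otimes2}}$ in the central-limit regimes (with especially delicate logarithmic bookkeeping at the critical value $H=3/4$), and establishing the $\mathcal{H}^{\otimes2}$-convergence of the rescaled kernels in the long-range-dependent regime $H>3/4$. All of these estimates are driven by the power-law decay $\rho(v)\sim H(2H-1)|v|^{2H-2}$, and each amounts to comparing discrete convolution sums of $\rho$ with their continuous analogues. Since the full technical argument is carried out in Theorem 7.4.1 of Nourdin \cite{Nourdin}, I would invoke that reference for the contraction bounds while keeping the chaos decomposition, the variance computations, and the regime dichotomy above explicit.
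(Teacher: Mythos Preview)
Your proposal is correct and indeed outlines the standard Breuer--Major/Fourth Moment Theorem argument from Nourdin--Peccati that the paper itself cites; the paper gives no proof of this proposition at all, merely referencing Theorem 7.4.1 in \cite{Nourdin}. So you have supplied considerably more than the paper does, and your sketch matches the cited source.
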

\section{Proofs}
 Let $\mathcal{F}_{t_k}:=\sigma(B_s,~0\le s\le t_k)$ and note that
\begin{equation*}
  \nabla_\theta Q_n(\theta)=\frac{1}{nh_n}\sum_{k=1}^{n}\left(X_{t_k}-X_{t_{k-1}}-h_nb(X_{t_{k-1}},\theta)\right)\nabla_\theta b(X_{t_{k-1}},\theta).
\end{equation*}
Since $\theta_n$ minimizes to $Q_n(\theta)^2$, we obtain that
\begin{equation*}
  Q_n(\theta_n)\nabla_\theta Q_n(\theta_n)=0.
\end{equation*}
To solve the above equation, we prepare the following lemma.
\begin{lemm}
  \label{calculus}
  Define $\zeta_n=\tau_n^H(\theta_0-\theta_n)$. Then, under assumptions of Theorem \ref{main},
  \begin{equation*}
    \left \{
    \begin{aligned}
      \tau_n^HQ_n(\theta_n)&=\tau_n^H\frac{1}{nh_n^2}\sum_{k=1}^n\left[\sigma^2(B_{t_k}-B_{t_{k-1}})^2-\sigma^2h_n^{2H}\right]+\frac{2\sigma}{nh_n}\sum_{k=1}^n(B_{t_k}-B_{t_{k-1}})\nabla_\theta b(X_{t_{k-1}},\theta_0)\zeta_n^T\\
      &~~~~~+\zeta_no_p(1)+o_p(1)\\
      \nabla_\theta Q_n(\theta_n)&=\frac{1}{nh_n}\sum_{k=1}^{n}\sigma(B_{t_k}-B_{t_{k-1}})\nabla_\theta b(X_{t_{k-1}},\theta_n)+o_p(1).\\
    \end{aligned}\right.
  \end{equation*}
\end{lemm}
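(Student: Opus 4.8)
The plan is to substitute the integral form of~\eqref{SDE} into $Q_n$ and $\nabla_\theta Q_n$, single out the two terms retained in the statement, and absorb every other contribution into $o_p(1)$ or $\zeta_no_p(1)$ using the moment bounds of Proposition~\ref{estimate}, the ergodic theorem of Lemma~\ref{ergodic}, and the consistency of Theorem~\ref{consistency}. Writing $\Delta_kB:=B_{t_k}-B_{t_{k-1}}$, equation~\eqref{SDE} gives
\[
X_{t_k}-X_{t_{k-1}}-h_nb(X_{t_{k-1}},\theta)=\sigma\Delta_kB+\varepsilon_k+h_ng_k(\theta),
\]
with $\varepsilon_k:=\int_{t_{k-1}}^{t_k}\bigl(b(X_s,\theta_0)-b(X_{t_{k-1}},\theta_0)\bigr)\,ds$ and $g_k(\theta):=b(X_{t_{k-1}},\theta_0)-b(X_{t_{k-1}},\theta)$. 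Expanding the square in $Q_n(\theta)$ (and keeping the analogous identity for $\nabla_\theta Q_n(\theta)$), then putting $\theta=\theta_n$ and Taylor-expanding $g_k(\theta_n)=\nabla_\theta b(X_{t_{k-1}},\theta_0)^T(\theta_0-\theta_n)+R_k$, where $|R_k|\le\tfrac12|\theta_0-\theta_n|^2\sup_{\theta}|\nabla^2_\theta b(X_{t_{k-1}},\theta)|\lesssim|\theta_0-\theta_n|^2(1+|X_{t_{k-1}}|^N)$ by Assumption~\ref{(A1)}: the $\sigma^2\bigl((\Delta_kB)^2-h_n^{2H}\bigr)$ summand of $\tau_n^HQ_n(\theta_n)$ is already the first retained term; since $\tau_n^H(\theta_0-\theta_n)=\zeta_n$, the linear part of $h_ng_k(\theta_n)$ inside the cross term $2\sigma\bigl(\varepsilon_k+h_ng_k(\theta_n)\bigr)\Delta_kB$ produces the second retained term $\tfrac{2\sigma}{nh_n}\sum_k\Delta_kB\,\nabla_\theta b(X_{t_{k-1}},\theta_0)\zeta_n^T$; and the $\sigma\Delta_kB$ summand of $\nabla_\theta Q_n(\theta_n)$ is its retained term. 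Everything else must be shown negligible.

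The contributions containing $\varepsilon_k$ are controlled by moments: the mean value theorem with Assumption~\ref{(A1)} and Proposition~\ref{estimate} gives $\|\varepsilon_k\|_{L^2}\lesssim h_n^{1+H}$, and then crude term-by-term Cauchy--Schwarz estimates yield $E\bigl|\tfrac{\tau_n^H}{nh_n^2}\sum_k\varepsilon_k\Delta_kB\bigr|\lesssim\tau_n^Hh_n^{2H-1}$, $E\bigl|\tfrac{\tau_n^H}{nh_n^2}\sum_k\varepsilon_k^2\bigr|\lesssim\tau_n^Hh_n^{2H}$, $\bigl|\tfrac{2\tau_n^H}{nh_n}\sum_k\varepsilon_kg_k(\theta_n)\bigr|\le|\zeta_n|\cdot O_p(h_n^H)$ and $\bigl\|\tfrac1{nh_n}\sum_k\varepsilon_k\nabla_\theta b(X_{t_{k-1}},\theta_n)\bigr\|\le O_p(h_n^H)$; with $h_n=\kappa n^{-\alpha}$, each of $\tau_n^Hh_n^{2H-1},\ \tau_n^Hh_n^{2H},\ h_n^H$ tends to $0$ in all three regimes of $\tau_n^H$, essentially because $\alpha>1/2$, so these pieces are $o_p(1)$, respectively $\zeta_no_p(1)$. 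The only other non-leading piece of $\nabla_\theta Q_n(\theta_n)$ is $\tfrac1n\sum_kg_k(\theta_n)\nabla_\theta b(X_{t_{k-1}},\theta_n)$, bounded by $|\theta_0-\theta_n|\cdot\tfrac{c}{n}\sum_k(1+|X_{t_{k-1}}|^N)^2$, which tends to $0$ a.s. by the first part of Lemma~\ref{ergodic} and Theorem~\ref{consistency}; this already yields the claimed formula for $\nabla_\theta Q_n(\theta_n)$, and the same reasoning handles the pure square, $\tfrac{\tau_n^H}{n}\sum_kg_k(\theta_n)^2\le\tau_n^H|\theta_0-\theta_n|^2\cdot\tfrac{c}{n}\sum_k(1+|X_{t_{k-1}}|^N)^2=\tfrac{|\zeta_n|^2}{\tau_n^H}O_p(1)=\zeta_no_p(1)$, using $\tfrac{|\zeta_n|}{\tau_n^H}=|\theta_0-\theta_n|\to0$ a.s.

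The genuinely delicate term, and the one I expect to be the main obstacle, is $\tfrac{2\sigma\tau_n^H}{nh_n}\sum_kR_k\Delta_kB$. The crude bound $\bigl|\sum_kR_k\Delta_kB\bigr|\le c|\theta_0-\theta_n|^2\sum_k(1+|X_{t_{k-1}}|^N)|\Delta_kB|=O_p\bigl(|\theta_0-\theta_n|^2\,nh_n^H\bigr)$ is not good enough, since it leaves a factor $h_n^{H-1}\to\infty$ that fails to be killed when $\alpha$ is close to $1/2$; one must exploit the cancellation of the fractional increments, which is precisely what the second part of Lemma~\ref{ergodic} supplies (it lowers the order of $\sum_k(\text{adapted weight})\Delta_kB$ from $nh_n^H$ to $nh_n$). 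Concretely, write the integral form of the Taylor remainder, $R_k=-(\theta_n-\theta_0)^T\bigl(\int_0^1(1-u)\nabla^2_\theta b(X_{t_{k-1}},\theta_0+u(\theta_n-\theta_0))\,du\bigr)(\theta_n-\theta_0)$, so that $\sum_kR_k\Delta_kB$ is a $(\theta_0-\theta_n)$-quadratic form built from $\sum_k\nabla^2_\theta b(X_{t_{k-1}},\theta')\Delta_kB$. Applying Lemma~\ref{ergodic} to $\nabla^2_\theta b$ (its potential-function form being available since $b=\partial_xU$, Assumption~\ref{(A2)}), which holds uniformly in the $\theta$-argument, and inserting $\theta'=\theta_0+u(\theta_n-\theta_0)$---legitimate for large $n$ by Theorem~\ref{consistency}---gives $\sum_kR_k\Delta_kB=\tfrac{nh_n}{(\tau_n^H)^2}|\zeta_n|^2O_p(1)$, whence $\tfrac{2\sigma\tau_n^H}{nh_n}\sum_kR_k\Delta_kB=\tfrac{|\zeta_n|}{\tau_n^H}\,|\zeta_n|\,O_p(1)=\zeta_no_p(1)$ again, because $\tfrac{|\zeta_n|}{\tau_n^H}=|\theta_0-\theta_n|\to0$ a.s. Collecting the retained terms and these bounds gives the two identities. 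Throughout, each substitution of $\theta=\theta_n$ (or $\theta'=\theta_0+u(\theta_n-\theta_0)$) inside an ergodic limit is legitimate because the convergence in Lemma~\ref{ergodic} is uniform in $\theta$, the limits are continuous in $\theta$, and $\theta_n\to\theta_0$ a.s.
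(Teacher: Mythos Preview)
Your approach is essentially the paper's: substitute the SDE, isolate the fractional quadratic variation and the weighted-increment term, and kill everything else via Proposition~\ref{estimate}, Lemma~\ref{ergodic}, and consistency. You are in fact more explicit than the paper about the cross-term remainder $\frac{2\sigma\tau_n^H}{nh_n}\sum_kR_k\Delta_kB$; the paper simply writes ``and we conclude that'' after disposing of the $\varepsilon_k$ pieces.

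One technical wrinkle: your handling of this remainder invokes the second half of Lemma~\ref{ergodic} with $f=\nabla^2_\theta b$, but the hypotheses of that lemma then require polynomial growth of $\partial_x\nabla^2_\theta b$ and $\nabla^3_\theta b$, neither of which is granted by Assumption~\ref{(A1)}. The cleaner route---and the one implicitly behind the paper's ``we conclude''---avoids second-order Taylor entirely: write $g_k(\theta_n)=\nabla_\theta b(X_{t_{k-1}},\tilde{\theta}_n)(\theta_0-\theta_n)^T$ with an intermediate point $\tilde{\theta}_n$, so that
\[
\frac{2\sigma\tau_n^H}{nh_n}\sum_k\Delta_kB\,g_k(\theta_n)=\Bigl(\frac{2\sigma}{nh_n}\sum_k\Delta_kB\,\nabla_\theta b(X_{t_{k-1}},\tilde{\theta}_n)\Bigr)\zeta_n^T.
\]
Now apply Lemma~\ref{ergodic} to $f=\nabla_\theta b$ (all required growth bounds are in Assumption~\ref{(A1)}, and the potential $\nabla_\theta U\in C^{2,1}$ exists by Assumption~\ref{(A2)}): the bracketed quantity converges uniformly in the $\theta$-argument, so substituting the random $\tilde{\theta}_n\to\theta_0$ shows it differs from its value at $\theta_0$ by $o_p(1)$, giving the retained term plus $\zeta_no_p(1)$. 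With this adjustment your argument is complete.
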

\begin{proof}
  By using \eqref{SDE}, we have
  \begin{equation*}
    \begin{aligned}
      Q_n(\theta_n)&=\frac{1}{nh_n^2}\sum_{k=1}^n\left(\left(X_{t_k}-X_{t_{k-1}}-h_nb(X_{t_{k-1}},\theta_n)\right)^2-\sigma^2h_n^{2H}\right)\\
      &=\frac{1}{nh_n^2}\sum_{k=1}^n\left\{\left(\int_{t_{k-1}}^{t_k}\left(b(X_s,\theta_0)-b(X_{t_{k-1}},\theta_n)\right)ds+\sigma(B_{t_k}-B_{t_{k-1}})\right)^2-\sigma^2h_n^{2H}\right\}\\
      &=\frac{1}{nh_n^2}\sum_{k=1}^n\Biggl\{\left(\int_{t_{k-1}}^{t_k}\left(b(X_s,\theta_0)-b(X_{t_{k-1}},\theta_n)\right)ds\right)^2+\sigma^2(B_{t_k}-B_{t_{k-1}})^2\\
      &~~~~~~~~~~~~~~~~~~~~~~~~~~~~+2\sigma(B_{t_k}-B_{t_{k-1}})\int_{t_{k-1}}^{t_k}\left(b(X_s,\theta_0)-b(X_{t_{k-1}},\theta_n)\right)ds-\sigma^2h_n^{2H}\Biggl\}.
    \end{aligned}
  \end{equation*}
  For the first term, we can calculate that
  \begin{equation*}
    \begin{aligned}
  &\left(\int_{t_{k-1}}^{t_k}\left(b(X_s,\theta_0)-b(X_{t_{k-1}},\theta_n)\right)ds\right)^2\\
  &=\left(\int_{t_{k-1}}^{t_k}\left(b(X_s,\theta_0)-b(X_{t_{k-1}},\theta_0)\right)ds+\int_{t_{k-1}}^{t_k}\left(b(X_{t_{k-1}},\theta_0)-b(X_{t_{k-1}},\theta_n)\right)ds\right)^2\\
  &=\left(\int_{t_{k-1}}^{t_k}\left(b(X_s,\theta_0)-b(X_{t_{k-1}},\theta_0)\right)ds\right)^2+\left(\int_{t_{k-1}}^{t_k}\left(b(X_{t_{k-1}},\theta_0)-b(X_{t_{k-1}},\theta_n)\right)ds\right)^2\\
  &~~~~~~~~~~~~~~~~~~~+2\left(\int_{t_{k-1}}^{t_k}\left(b(X_s,\theta_0)-b(X_{t_{k-1}},\theta_0)\right)ds\right)\left(\int_{t_{k-1}}^{t_k}\left(b(X_{t_{k-1}},\theta_0)-b(X_{t_{k-1}},\theta_n)\right)ds\right)
    \end{aligned}
  \end{equation*}
Through Minkowski's inequality and Proposition \ref{estimate} we can estimate that
\begin{equation*}
  \begin{aligned}
    &\frac{1}{nh_n^2}\sum_{k=1}^n\left\|\int_{t_{k-1}}^{t_k}\left(b(X_s,\theta_0)-b(X_{t_{k-1}},\theta_0)\right)ds\right\|_{L^2(\Omega)}^2\\
    &\le \frac{1}{nh_n^2}\sum_{k=1}^n\left(\int_{t_{k-1}}^{t_k}\left\|b(X_s,\theta_0)-b(X_{t_{k-1}},\theta_0)\right\|_{L^2(\Omega)}ds\right)^2\lesssim h_n^{2H}.
  \end{aligned}
\end{equation*}
Therefore, under the assumption $nh_n^2\rightarrow 0$ as $n\rightarrow\infty$, we obtain
\begin{equation*}
  \tau_n^H\frac{1}{nh_n^2}\sum_{k=1}^n\left(\int_{t_{k-1}}^{t_k}\left(b(X_s,\theta_0)-b(X_{t_{k-1}},\theta_0)\right)ds\right)^2\xrightarrow{L^1}0,
\end{equation*}
as $n\rightarrow\infty.$ Applying Taylor's formula, we have
\begin{equation*}
  \begin{aligned}
    &\left(\int_{t_{k-1}}^{t_k}\left(b(X_s,\theta_0)-b(X_{t_{k-1}},\theta_0)\right)ds\right)\left(\int_{t_{k-1}}^{t_k}\left(b(X_{t_{k-1}},\theta_0)-b(X_{t_{k-1}},\theta_n)\right)ds\right)\\
    &=h_n\left(\int_{t_{k-1}}^{t_k}\left(b(X_s,\theta_0)-b(X_{t_{k-1}},\theta_0)\right)ds\right)\nabla_\theta b(X_{t_{k-1}},\tilde{\theta}_n)(\theta_0-\theta_n)^T,
  \end{aligned}
\end{equation*}
where $\tilde{\theta}_n:=\theta_0+\beta_n(\theta_n-\theta_0),~0<\beta_n<1$.
With Cauchy--Schwartz and Minkowski's inequalities, Proposition \ref{estimate}, and (A1), we obtain 
\begin{equation*}
 \begin{aligned}
   &\frac{1}{nh_n^2}\sum_{k=1}^nE\left|h_n\left(\int_{t_{k-1}}^{t_k}\left(b(X_s,\theta_0)-b(X_{t_{k-1}},\theta_0)\right)ds\right)\nabla_\theta b(X_{t_{k-1}},\tilde{\theta}_n)\right|\\
   &\lesssim \frac{1}{nh_n}\sum_{k=1}^n\left\|\int_{t_{k-1}}^{t_k}\left(b(X_s,\theta_0)-b(X_{t_{k-1}},\theta_0)\right)ds\right\|_{L^2(\Omega)}\left\|1+|X_{t_{k-1}}|^N\right\|_{L^2(\Omega)}\\
   &\lesssim\frac{1}{nh_n}\sum_{k=1}^n\int_{t_{k-1}}^{t_k}\left\|b(X_s,\theta_0)-b(X_{t_{k-1}},\theta_0)\right\|_{L^2(\Omega)}ds
   \lesssim h_n^{H}.
 \end{aligned}
\end{equation*}
Therefore
\begin{equation*}
  \tau_n^H\frac{1}{nh_n^2}\sum_{k=1}^n\left(\int_{t_{k-1}}^{t_k}\left(b(X_s,\theta_0)-b(X_{t_{k-1}},\theta_0)\right)ds\right)\left(\int_{t_{k-1}}^{t_k}\left(b(X_{t_{k-1}},\theta_0)-b(X_{t_{k-1}},\theta_n)\right)ds\right)\xrightarrow{L^1}0,
\end{equation*}
as $n\rightarrow\infty$. With the Taylor's formula again, we have
\begin{equation*}
  \begin{aligned}
    \tau_n^H\frac{1}{nh_n^2}\sum_{k=1}^n\left(\int_{t_{k-1}}^{t_k}\left(b(X_{t_{k-1}},\theta_0)-b(X_{t_{k-1}},\theta_n)\right)ds\right)^2&=\frac{\tau_n^H}{n}\sum_{k=1}^n\Biggl(\nabla_\theta b(X_{t_{k-1}},\theta_0)(\theta_0-\theta_n)^T\\
    &\hspace{50pt}+(\theta_0-\theta_n)\nabla_\theta^2b(X_{t_{k-1}},\tilde{\theta}_n)(\theta_0-\theta_n)^T\Biggl)^2.
  \end{aligned}
\end{equation*}
Through Theorem \ref{consistency} and Proposition \ref{ergodic}, we can obtain
\begin{equation*}
  \tau_n^H\frac{1}{nh_n^2}\sum_{k=1}^n\left(\int_{t_{k-1}}^{t_k}\left(b(X_s,\theta_0)-b(X_{t_{k-1}},\theta_n)\right)ds\right)^2=o_P(1)+\zeta_no_p(1).
\end{equation*}
Let us consider the cross term. At first, we decompose as follows
\begin{equation*}
  \begin{aligned}
      &\tau_n^H\frac{2\sigma}{nh_n^2}\sum_{k=1}^n(B_{t_k}-B_{t_{k-1}})\int_{t_{k-1}}^{t_k}\left(b(X_s,\theta_0)-b(X_{t_{k-1}},\theta_n)\right)ds\\
      &=\tau_n^H\frac{2\sigma}{nh_n^2}\sum_{k=1}^n(B_{t_k}-B_{t_{k-1}})\int_{t_{k-1}}^{t_k}\left(b(X_s,\theta_0)-b(X_{t_{k-1}},\theta_0)\right)ds\\
      &~~~~~~~~~~~~~~~~~+\tau_n^H\frac{2\sigma}{nh_n^2}\sum_{k=1}^n(B_{t_k}-B_{t_{k-1}})\int_{t_{k-1}}^{t_k}\left(b(X_{t_{k-1}},\theta_0)-b(X_{t_{k-1}},\theta_n)\right)ds.
  \end{aligned}
\end{equation*}
From Cauchy—Schwartz and Minkowski's inequalities and Proposition \ref{estimate}, we obtain
\begin{equation*}
  \begin{aligned}
    &E\left|(B_{t_k}-B_{t_{k-1}})\int_{t_{k-1}}^{t_k}\left(b(X_s,\theta_0)-b(X_{t_{k-1}},\theta_0)\right)ds\right|\\
    &\le\left\|B_{t_k}-B_{t_{k-1}}\right\|_{L^2(\Omega)}\int_{t_{k-1}}^{t_k}\left\|b(X_s,\theta_0)-b(X_{t_{k-1}},\theta_0)\right\|_{L^2(\Omega)}ds\lesssim h_n^{2H+1}.
  \end{aligned}
\end{equation*}
Thus,
\begin{equation*}
     \tau_n^H\frac{2\sigma}{nh_n^2}\sum_{k=1}^n(B_{t_k}-B_{t_{k-1}})\int_{t_{k-1}}^{t_k}\left(b(X_s,\theta_0)-b(X_{t_{k-1}},\theta_0)\right)ds=o_p(1),
\end{equation*},
and we conclude that
\begin{equation*}
  \begin{aligned}
      \tau_n^HQ_n(\theta_n)&=\tau_n^H\frac{1}{nh_n^2}\sum_{k=1}^n\left[\sigma^2(B_{t_k}-B_{t_{k-1}})^2-\sigma^2h_n^{2H}\right]+\frac{2\sigma}{nh_n}\sum_{k=1}^n(B_{t_k}-B_{t_{k-1}})\nabla_\theta b(X_{t_{k-1}},\theta_0)\zeta_n^T\\
      &~~~~~+\zeta_no_p(1)+o_p(1).
  \end{aligned}
\end{equation*}
We will support the second equality. Using \eqref{SDE}, we have
\begin{equation*}
  \begin{aligned}
    \nabla_\theta Q_n(\theta_n)&=\frac{1}{nh_n}\sum_{k=1}^{n}\left(X_{t_k}-X_{t_{k-1}}-h_nb(X_{t_{k-1}},\theta_n)\right)\nabla_\theta b(X_{t_{k-1}},\theta_n)\\
    &=\frac{1}{nh_n}\sum_{k=1}^{n}\left(\int_{t_{k-1}}^{t_k}\left(b(X_s,\theta_0)-b(X_{t_{k-1}},\theta_n)\right)ds+\sigma(B_{t_k}-B_{t_{k-1}})\right)\nabla_\theta b(X_{t_{k-1}},\theta_n).
  \end{aligned}
\end{equation*}
Let us now apply Taylor's formula to obtain that
\begin{equation*}
  \begin{aligned}
      \frac{1}{nh_n}\sum_{k=1}^{n}&\left(\int_{t_{k-1}}^{t_k}\left(b(X_s,\theta_0)-b(X_{t_{k-1}},\theta_n)\right)ds\right)\nabla_\theta b(X_{t_{k-1}},\theta_n)\\
      &=\frac{1}{nh_n}\sum_{k=1}^{n}\left(\int_{t_{k-1}}^{t_k}\left(b(X_s,\theta_0)-b(X_{t_{k-1}},\theta_0)\right)ds\right)\nabla_\theta b(X_{t_{k-1}},\theta_n)\\
      &~~~~~~~+\frac{1}{nh_n}\sum_{k=1}^{n}\left(\int_{t_{k-1}}^{t_k}\left(b(X_{t_{k-1}},\theta_0)-b(X_{t_{k-1}},\theta_n)\right)ds\right)\nabla_\theta b(X_{t_{k-1}},\theta_n)\\
      &=\frac{1}{n}\sum_{k=1}^{n}\nabla_\theta b(X_{t_{k-1}},\theta_0)(\theta_0-\theta_n)^T\nabla_\theta b(X_{t_{k-1}},\theta_0)+o_p(1).
  \end{aligned}
\end{equation*},
and we complete the proof.
\end{proof}
\begin{proof}[Proof of Theorem \ref{main}]
  Since the relationship $\tau_n^HQ_n(\theta_n)\nabla_\theta{Q}_n(\theta_n)=0$ holds, we have
  \begin{equation*}
    \begin{aligned}
      &\left(\tau_n^H\frac{1}{nh_n^2}\sum_{k=1}^n\left[\sigma^2(B_{t_k}-B_{t_{k-1}})^2-\sigma^2h_n^{2H}\right]+\left(\frac{2\sigma}{nh_n}\sum_{k=1}^n(B_{t_k}-B_{t_{k-1}})\nabla_\theta b(X_{t_{k-1}},\theta_0)+o_p(1)\right)\zeta_n^T+o_p(1)\right)\\
      &\hspace{50pt}\times\left(\frac{1}{nh_n}\sum_{k=1}^{n}\sigma(B_{t_k}-B_{t_{k-1}})\nabla_\theta b(X_{t_{k-1}},\theta_n)+o_p(1)\right)=0.
    \end{aligned}
  \end{equation*}
  By using Lemma \ref{ergodic} and Proposition \ref{fractional Brownian motion convergence }, we conclude the statement of Theorem \ref{main}.
\end{proof}

\paragraph{Acknowledgments}
The first author was partially supported by JSPS KAKENHI Grant Numbers JP21K03358 and JST CREST JPMJCR14D7, Japan.

\end{document}